\newtheorem{thm}{Theorem}[section]
\theoremstyle{definition}
\theoremstyle{remark}
\newtheorem*{obs}{Observation}
\title{An Interesting Number Theoretic Problem}
\author{Manjil P.~Saikia\\ Department of Mathematical Sciences,\\ Tezpur University,\\ Napaam, Dist. Sonitpur, India\\
\texttt{manjil.saikia@gmail.com}}
\begin{document}
\maketitle

\begin{abstract}
In this short note we prove a result that is an extension of an old Olympiad Problem and is a very simple variant of the question of finding an approximation for $k$ where it is a
nonzero constant and it satisfies the equation $a^{k}+b^{k}=c$ where $a,b,c$ are all real constants.
\end{abstract}

\section{Introduction and Motivation}

The motivation for this note and the result stated here is in the following question. Can we find an approximation for $k$ where it is a
nonzero constant and it satisfies the equation $a^{k}+b^{k}=c$ where $a,b,c$ are all real constants? We do not answer this question here, but we give a slightly different result, similar to the stated question partly inspired by an Olympiad problem.

\section{Main Result}

We state and prove the following extended theorem motivated by a discussion in \cite{ml}.

\begin{thm}Given a finite set $P$ of prime numbers, there exists a
positive integer $x$ such that it can be written in the form
$a^{p} + b^{p}$ ($a,b$ are positive integers), for each $p\in P$,
and cannot be written in that form for each $p$ not in $P$.
\end{thm}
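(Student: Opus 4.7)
\medskip
\noindent\textbf{Proof plan.} My plan is to take $x$ to be a carefully chosen power of $2$. Concretely, set
\[
    N \;=\; 1 + \prod_{p \in P} p \qquad \bigl(\text{empty product} = 1 \text{ if } P = \emptyset\bigr), \qquad x \;=\; 2^{N}.
\]
Everything then reduces to the following lemma, which I would prove as the main step: \emph{for every prime $q$, the equation $2^{N} = a^{q} + b^{q}$ admits a solution in positive integers $a,b$ if and only if $q \mid N - 1$.} Granting this, the theorem is immediate, because by construction the prime divisors of $N - 1$ are exactly the elements of $P$, so $x$ is a sum of two $q$-th powers for $q \in P$ and for no other prime.

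The ``if'' direction of the lemma is a one-line construction: put $m = (N - 1)/q$ and observe $(2^{m})^{q} + (2^{m})^{q} = 2\cdot 2^{mq} = 2^{N}$. The ``only if'' direction is the substantive step. I would begin by showing that in any solution $a^{q} + b^{q} = 2^{N}$ the $2$-adic valuations of $a$ and $b$ must agree: if $v_{2}(a) < v_{2}(b)$, then $v_{2}(a^{q} + b^{q}) = q\,v_{2}(a)$, which pins $v_{2}(a) = N/q$, and the equation rearranges to $b^{q} = 2^{N}\bigl(1 - a'^{\,q}\bigr)$ with $a'$ odd, a non-positive quantity. Writing $a = 2^{k} a_{0}$, $b = 2^{k} b_{0}$ with $a_{0}, b_{0}$ odd then reduces the problem to $a_{0}^{q} + b_{0}^{q} = 2^{N - kq}$ with both $a_{0}, b_{0}$ odd.

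For odd $q$ I would exploit the factorisation
\[
    a_{0}^{q} + b_{0}^{q} \;=\; (a_{0} + b_{0})\bigl(a_{0}^{q-1} - a_{0}^{q-2} b_{0} + \cdots + b_{0}^{q-1}\bigr).
\]
The second factor is a sum of $q$ odd terms and is therefore itself odd; since the product is a power of $2$, that factor must equal $1$. The resulting identity $a_{0}^{q} + b_{0}^{q} = a_{0} + b_{0}$, combined with $t^{q} \geq t$ for $t \geq 1$ (with equality only at $t = 1$ since $q \geq 2$), forces $a_{0} = b_{0} = 1$, so the reduced equation reads $2 = 2^{N - kq}$ and $q \mid N - 1$. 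The case $q = 2$ runs in parallel via $a_{0}^{2} + b_{0}^{2} \equiv 2 \pmod{4}$ for odd $a_{0}, b_{0}$, which similarly pins the reduced power of $2$ to $2$ itself.

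The main obstacle I expect is keeping the $2$-adic bookkeeping clean throughout the two reductions (first to equal valuations, and then to the ``both odd'' case); once those are done the cofactor argument is very short, and the rest of the proof is simply the elementary construction of $N$.
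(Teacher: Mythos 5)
Your proposal is correct and takes essentially the same route as the paper: the same witness $x = 2^{1+\prod_{p\in P}p}$, the same construction $(2^{m/p})^{p}+(2^{m/p})^{p}$ for primes in $P$, and the same exclusion argument via matching $2$-adic valuations, the odd cofactor $\frac{a_0^{q}+b_0^{q}}{a_0+b_0}$ forcing $a_0=b_0=1$, and the separate $\bmod\ 4$ check for $q=2$. Your write-up is the more careful of the two (in particular the justification that the valuations of $a$ and $b$ must agree), but the underlying argument is identical.
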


\begin{proof}
Let $m$ denote the product of all primes $p$ which are in
$P$. Then we consider $x=2^{m+1}$.

Now we prove that $x$ satisfy the condition :
$x=2^{m}+2^{m}=(2^{\frac{m}{p}})^{p}+(2^{\frac{m}{p}})^{p}$,
$\forall p $ which are in $P$.

So, it can be represented in the form , $a^{p}+b^{p}$ $\forall p$
$\in P$. Next we will prove that the equation
,$2^{m}=a^{p}+b^{p}$, has no solution for which $p$ is not in $P$.

\textbf{Case-I:}~$p=2$, then $a=2^{k_{1}}a_{1}$ and
$b=2^{k_{2}}b_{1}$ where $a,b$ are congruent to $1$ modulo $2$. It
is easy to check that $k_{1}=k_{2}$ and $a_{1},b_{1}$ are
congruent to $1$ modulo $2$.

\textbf{Case-II:}~$a=2^{k_{1}}c,b=2^{k_{2}}d$ where $c,d$ are
congruent to $1$ modulo $2$. Suppose $a^{p}+b^{p}=2^{m+1}$ then
$k_{1}=k_{2}$. Because $\frac{c^{p}+d^{p}}{c+d}$ divides
$a^{p}+b^{p}$ and $\frac{c^{p}+d^{p}}{c+d}=1$ modulo $2$.
 from the above we can infer that $(c,d)=(1,1)$ and it gives that
 $a=b=2^{k}$. So $2^{pk}=2^{m}$ or we have that $p$ divides $m$
 and so $p$$\in P$.
 \end{proof}

\begin{obs}
Although we do not answer the original question but we can say that the answer to that question will be in the affirmative because we can certainly
find a set $A$, where we will find all the $k$'s.

Another aspect of the original problem will be to find approximations of $k$ such that instead of $c$ we have $c^{k}$. Fermat's Last Theorem certainly eliminates all the natural numbers from the set $A$ except $1,2$.

\end{obs}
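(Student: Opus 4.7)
The natural candidate is $x = 2^{m+1}$, where $m$ denotes the product of the primes in $P$. The representable half is then immediate: for each $p \in P$ the ratio $m/p$ is a positive integer, so $x = 2 \cdot 2^{m} = (2^{m/p})^{p} + (2^{m/p})^{p}$. All the content therefore lies in proving that no prime $q \notin P$ can produce a representation of $x$.

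To do so, I would assume $a^{q} + b^{q} = 2^{m+1}$ for some positive integers $a, b$ and some prime $q \notin P$, and run a $2$-adic analysis. Writing $a = 2^{k_{1}} c$ and $b = 2^{k_{2}} d$ with $c, d$ odd and $k_{1} \le k_{2}$, I factor $2^{k_{1} q}$ out of the equation, so that the residual factor $c^{q} + 2^{q(k_{2} - k_{1})} d^{q}$ must itself be a power of $2$. If $k_{1} < k_{2}$ this residual is odd and exceeds $1$, a contradiction, so $k_{1} = k_{2} =: k$ and the problem reduces to showing $c^{q} + d^{q}$ is a controlled power of $2$ only when $c = d = 1$.

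At this point I split cases by the parity of $q$. For $q$ odd, I would use the factorisation $c^{q} + d^{q} = (c + d) Q$ with $Q = c^{q-1} - c^{q-2} d + \cdots + d^{q-1}$; since $c, d$ are odd and $Q$ is an alternating sum of an odd number of odd summands, $Q \equiv q \equiv 1 \pmod{2}$. So $Q$ is odd, and for $(c+d) Q$ to be a pure power of $2$ the cofactor $Q$ must equal $1$. A short monotonicity estimate ($c^{q} + d^{q} \ge c + d$ for $c, d \ge 1$ with equality iff $c = d = 1$) then forces $a = b = 2^{k}$, so the equation becomes $2^{kq + 1} = 2^{m+1}$, giving $q \mid m$ and hence $q \in P$, contradicting the assumption. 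The case $q = 2$ is similar but shorter: $c^{2} + d^{2} \equiv 2 \pmod{4}$ for odd $c, d$ forces $c^{2} + d^{2} = 2$, whence $c = d = 1$, and one concludes $2 \mid m$, i.e.\ $2 \in P$.

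The main obstacle I foresee is pinning down that the odd cofactor $Q$ equals $1$; once that is granted, the rest is routine bookkeeping on $2$-adic valuations. Care is also needed to make the monotonicity step work uniformly in $q$, rather than slipping into a case analysis on small primes, and to handle $q = 2$ in parallel rather than by an ad hoc side calculation.
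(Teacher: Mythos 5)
Your proposal does not address the statement at hand. The statement is the Observation, which makes two claims of a quite different character: first, that the answer to the original analytic question (approximating a nonzero real $k$ with $a^{k}+b^{k}=c$ for real constants $a,b,c$) is affirmative because the admissible $k$'s form some set $A$ that one can locate; second, that in the variant $a^{k}+b^{k}=c^{k}$, Fermat's Last Theorem excludes every natural number except $1$ and $2$ from $A$. Your argument --- the choice $x=2^{m+1}$ with $m$ the product of the primes in $P$, and the $2$-adic analysis showing no representation $a^{q}+b^{q}$ exists for primes $q\notin P$ --- is a proof of the main Theorem, not of the Observation. Nothing in it touches the real-exponent equation, the set $A$, or the Fermat-type variant, so as a proof of the stated Observation it has a complete gap: the wrong claim is being proved.

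Two further points are worth recording. First, the Observation as written is not a precise mathematical assertion (the paper itself offers no proof of it), and its FLT remark tacitly requires $a,b,c$ to be positive integers, whereas the original question takes them to be real constants; any honest attempt at the Observation would have to begin by making these hypotheses explicit. Second, and to your credit, the argument you did write is a cleaner and more complete proof of the Theorem than the one in the paper: you handle the valuation step $k_{1}=k_{2}$ explicitly, you justify that the odd cofactor $Q=c^{q-1}-c^{q-2}d+\cdots+d^{q-1}$ is odd and must equal $1$, and you treat $q=2$ in parallel, where the paper's Case I trails off without a conclusion and its Case II works with $2^{m}$ where it should be $2^{m+1}$. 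If you intended to prove the Theorem, relabel your target; if you intended to prove the Observation, you have not yet started.
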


\section{Acknowledgements}

The author would like to thank Dr.~Bhupen Deka for asking the original question, and to Prof.~Nayandeep Deka Baruah for reading through the very old draft \cite{mps} of this note.

\end{document}